\documentclass[11pt]{amsart}

\usepackage[T1]{fontenc}
\usepackage[utf8]{inputenc}

\usepackage{amsmath,amssymb,amsfonts,amsthm,mathtools}
\usepackage{mathrsfs}
\usepackage{enumitem}
\usepackage{tikz}
\usepackage[colorlinks=true,
            linkcolor=blue,
            citecolor=blue,
            urlcolor=blue]{hyperref}
\usepackage[nameinlink,noabbrev]{cleveref}
\usepackage{cite}

\numberwithin{equation}{section}

\newtheorem{theorem}{Theorem}[section]

\newtheorem{proposition}[theorem]{Proposition}

\theoremstyle{definition}
\newtheorem{definition}[theorem]{Definition}

\title{On the Transitive Binary $G$-Spaces}

\author{Pavel S. Gevorgyan}
\address{%
\scalebox{0.94}[1]{Department of Mathematical Analysis named after Academician P.S. Novikov}\\
Moscow Pedagogical State University, Moscow, Russia}
\email{pgev@yandex.ru}

\subjclass[2020]{Primary 54H15; Secondary 54H20, 22A05, 20M20}
\keywords{binary action, distributive subsets and groups, transitive binary $G$-space}

\begin{document}

\begin{abstract}
Distributive subsets of the group of all invertible continuous binary operations on a
 topological space are considered, and it is proved that the subgroups generated by them are also
 distributive. A criterion for the distributivity of a binary action of a topological group $G$ on a space $X$
 is obtained. The concept of transitive binary $G$-space is introduced, and a classification of transitive
 distributive binary $G$-spaces is given in the case of a compact group $G$.
\end{abstract}

\maketitle

\section{Introduction}

Representations of groups by invertible binary operations of some set play an important role in various
questions of algebra and other branches of mathematics. In terms of binary representations of
groups, one of the old problems of algebra was solved: the problem of describing those
groups that can be multiplicative groups of fields \cite{movsisyan-1}.

Studying binary representations of a topological group $G$, or binary $G$-spaces, dates back to
work \cite{Gev}. In a more general sense, a representation of a topological group $G$ should be
understood as a homomorphism of the group $G$ into the group of all invertible continuous binary
operations of a space $X$. The category of binary $G$-spaces and biequivariant maps is an extension
of the category of $G$-spaces and equivariant maps. The class of distributive binary $G$-spaces plays
a special role; important notions and results of equivariant topology extend to this class
\cite{Gev2,Gev1,Gev-Iliadis,Gev-Naz,Gev-3,Gev4,Gev-Quitzeh}.

In the classification of $G$-spaces, as is well known, the central role is played by the category of
transitive $G$-spaces, or the category of $G$-orbits \cite{Palais,Bredon,Dieck}. In the present paper
we define the notion of a transitive binary $G$-space and provide a complete classification of these
spaces in the class of distributive binary $G$-spaces with a compact group $G$ (Definition~\ref{def-1}
and Theorem~\ref{prop-4}). For this purpose, we study the distributive subsets of the group of all
invertible continuous binary operations of a topological space $X$ (Propositions~\ref{cor-obratim},
\ref{cor-obratim-2} and Theorem~\ref{th-10}), prove a criterion for the distributivity of a binary action
of a group $G$ on $X$ (Theorem~\ref{th-11}), and establish several properties of stationary subgroups
$G_{(x,y)}$ of the pair $(x,y)$, $x,y\in X$ (Propositions~\ref{prop-10}--\ref{prop-11}).

\section{Preliminary Information}

Let $X$ be a topological space. By $C_2(X)$ we denote the space of all continuous binary operations
$f:X^2\to X$ with the compact-open topology. The space $C_2(X)$ with the operation
\begin{equation*}\label{eq-operation}
(f\varphi)(x,y)=f(x,\varphi(x,y)),
\end{equation*}
where $x,y\in X$, is a topological monoid with unity $e(x,y)=y$.

By $H_2(X)$ we denote the group of all invertible binary operations of the monoid $C_2(X)$.

Let $G$ be a topological group. A continuous map
$\alpha:G\times X^2\to X$ is called a \textit{binary action} of the group $G$
on a space $X$ if, for arbitrary $g,h\in G$ and $x,y\in X$, the following
equalities hold:
\begin{equation*}
gh(x,y)=g(x,h(x,y)),
\end{equation*}
\begin{equation*}
e(x,y)=y,
\end{equation*}
where $e$ is the identity element of the group $G$, and
$g(x,y)=\alpha(g,x,y)$. A space $X$ with a fixed binary action $\alpha$ of
the group $G$, or a triple $(G,X,\alpha)$, is called a \textit{binary
$G$-space}.

Any topological group $G$ acts binarily on itself by means of the
\emph{conjugate left translation}
\begin{equation}\label{eq-H on G}
g(x,y)=xgx^{-1}y,
\end{equation}
where $g,x,y\in G$.

An arbitrary element $g\in G$ generates a continuous binary operation
$\alpha_g:X^2\to X$ by the formula
\begin{equation*}
\alpha_g(x,y)=\alpha(g,x,y).
\end{equation*}
The map $g\to \alpha_g$ defines a homomorphism
\[
\theta:G\to H_2(X)
\]
of the group $G$ into the group $H_2(X)$ of all invertible continuous binary
operations of the space $X$. The kernel of the homomorphism $\theta$ is called
the \emph{kernel} of the binary action $\alpha$. Thus,
\[
\ker\alpha
=
\{g\in G;\quad g(x,y)=y,\ \text{for all } x,y\in X\}.
\]
It is not hard to see that $\ker\alpha$ is a closed normal subgroup of the
group $G$.

A binary action $\alpha$ is called \emph{effective} if $\ker\alpha=e$, that is,
the map $\theta$ is injective.

Let $(G,X,\alpha)$ and $(G,Y,\beta)$ be binary $G$-spaces. A continuous map
$f:X\to Y$ is called \emph{biequivariant} if the condition
\[
f(\alpha(g,x,y))=\beta(g,f(x),f(y))
\]
is fulfilled, or equivalently,
\[
f(g(x,y))=g(f(x),f(y))
\]
for all $g\in G$ and $x,y\in X$.

A biequivariant map $f:X\to Y$ which is also a homeomorphism is called an
\textit{equivalence} of binary $G$-spaces, or a \textit{biequimorphism}.

All binary $G$-spaces and biequivariant maps form a category which is a
natural extension of the category of $G$-spaces and equivariant maps.

A binary $G$-space $(G,X,\alpha)$ is called \textit{distributive} if, for
arbitrary $x,y,z\in X$ and $g,h\in G$, the following equality holds:
\begin{equation}\label{eq1-1}
g(x,h(y,z))=h(g(x,y),g(x,z)).
\end{equation}
In this case $\alpha$ is called a \emph{distributive} binary action of the
group $G$ on $X$.

The binary action \eqref{eq-H on G} of the group $G$ on itself by means of the
conjugate left translation is distributive, whereas the binary action of the group
$G$ on itself given by the formula
$g(x,y)=x^{-1}gxy$, $g,x,y\in G$, does not possess this property.

\section{Distributive Subsets of the Group of Invertible Binary Operations
on a Topological Space}

Let $g,h\in H_2(X)$ be invertible binary operations of the space $X$. We call
an ordered pair $(g,h)$ a \emph{distributive pair} if it satisfies the identity
\eqref{eq1-1}. A subset $D\subset H_2(X)$ is called \emph{distributive} if all
its elements are pairwise distributive.

For an arbitrary $x\in X$, we define a map $g_x:X\to X$ by the formula
$g_x(y)=g(x,y)$. Note that the following equality holds:
\begin{equation}\label{eq-g_xh_x}
(gh)_x=g_xh_x
\end{equation}
for arbitrary $g,h\in H_2(X)$ and $x\in X$.

\begin{proposition}\label{th-criteria}
Let $g,h\in H_2(X)$ be invertible binary operations of the space $X$. Then the
following statements are equivalent:

1. The pair $(g,h)$ is a distributive pair.

2. The map $g_x:X\to X$ satisfies the equality
\[
g_x(h(y,z))=h(g_x(y),g_y(z))
\]
for all $x,y,z\in X$.

3. The equality
\[
g_xh_y=h_{g_x(y)}g_x
\]
holds for all $x,y\in X$.
\end{proposition}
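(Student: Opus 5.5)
The plan is to unwind all three conditions into the same pointwise identity in the variables $x,y,z\in X$, using only the definition $g_x(y)=g(x,y)$. No earlier result beyond the definitions, and not even \eqref{eq-g_xh_x}, is needed.

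\emph{Step 1: (1)$\Leftrightarrow$(2).} Condition \eqref{eq1-1} for the pair $(g,h)$ reads $g(x,h(y,z))=h(g(x,y),g(x,z))$ for all $x,y,z$. I will substitute $g(x,u)=g_x(u)$ for $u=h(y,z)$, $u=y$ and $u=z$. This turns it into $g_x(h(y,z))=h(g_x(y),g_x(z))$, which is statement 2; each substitution is reversible, so the two are equivalent.

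One caveat here. The second argument on the right-hand side of statement 2 is printed as $g_y(z)$, but it must be $g_x(z)$, since it is the rewriting of $g(x,z)$. With $g_y(z)$ the statement would not be a reformulation of \eqref{eq1-1}. I would therefore read statement 2 with $g_x(z)$, and I expect this typographical point to be the only real obstacle.

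\emph{Step 2: (2)$\Leftrightarrow$(3).} Two maps $X\to X$ are equal iff they agree at every $z\in X$, so I evaluate both sides of (3) at an arbitrary $z$:
\[
(g_xh_y)(z)=g_x(h_y(z))=g_x(h(y,z)),\qquad
(h_{g_x(y)}g_x)(z)=h(g_x(y),g_x(z)).
\]
Hence the equality $g_xh_y=h_{g_x(y)}g_x$ for all $x,y$ is exactly the statement that $g_x(h(y,z))=h(g_x(y),g_x(z))$ for all $x,y,z$, which is (2). Here the composition $g_xh_y$ is ordinary composition of self-maps of $X$, the same convention as in \eqref{eq-g_xh_x}.

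Combining the two steps gives (1)$\Leftrightarrow$(2)$\Leftrightarrow$(3). Invertibility of $g,h$ in $H_2(X)$ plays no role in the argument; it is only the ambient setting. The proof consists of a few lines of direct computation.
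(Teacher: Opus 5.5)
Your proof is correct and is the direct unwinding of definitions that the paper omits as ``rather simple'': statement (1), and statement (3) evaluated at an arbitrary $z$, both reduce to the pointwise identity $g_x(h(y,z))=h(g_x(y),g_x(z))$. You are also right that $g_y(z)$ in statement 2 must be read as $g_x(z)$; this matches condition 2 of Theorem~\ref{th-11} (biequivariance of $g_x$), and the literal version is actually false, since for $h=e$ (the unit $e(x,y)=y$) the pair $(g,e)$ is always distributive, whereas the literal statement would force $g(x,z)=g(y,z)$ for all $x,y,z$.
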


We omit the proof of this proposition, since it is rather simple.

\begin{proposition}\label{cor-obratim}
Let $g,h\in H_2(X)$ be invertible binary operations of the space $X$. If
$(g,h)$ is a distributive pair, then $(g,h^{-1})$, $(g^{-1},h)$, and
$(g^{-1},h^{-1})$ are also distributive pairs.
\end{proposition}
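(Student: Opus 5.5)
The plan is to work entirely with criterion~3 of Proposition~\ref{th-criteria}. That criterion says $(g,h)$ is distributive if and only if
\[
g_xh_y=h_{g_x(y)}g_x \quad\text{for all } x,y\in X,
\]
so the statement reduces to identities between self-maps of $X$.

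The preliminary step is to show that $(g^{-1})_x=(g_x)^{-1}$ for every $g\in H_2(X)$ and $x\in X$. Since $e(x,y)=y$, we have $e_x=\mathrm{id}_X$. Applying \eqref{eq-g_xh_x} to $gg^{-1}=e=g^{-1}g$ gives
\[
g_x(g^{-1})_x=\mathrm{id}_X=(g^{-1})_xg_x .
\]
Hence each $g_x$ is a bijection with inverse $(g^{-1})_x$. The same holds for $h$, so $(h^{-1})_y=(h_y)^{-1}$.

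\emph{The pair $(g,h^{-1})$.} Start from $g_xh_y=h_{g_x(y)}g_x$ and take inverses of both sides. This gives
\[
h_y^{-1}g_x^{-1}=g_x^{-1}h_{g_x(y)}^{-1}.
\]
Multiplying on the left and on the right by $g_x$ yields $g_xh_y^{-1}=h_{g_x(y)}^{-1}g_x$. By the preliminary step this is exactly $g_x(h^{-1})_y=(h^{-1})_{g_x(y)}g_x$, which is criterion~3 for $(g,h^{-1})$.

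\emph{The pair $(g^{-1},h)$.} Fix $x,y$ and apply the hypothesis at the point $y'=g_x^{-1}(y)$. Since $g_x(y')=y$, this reads
\[
g_xh_{y'}=h_yg_x, \qquad\text{i.e.}\qquad h_{y'}g_x^{-1}=g_x^{-1}h_y .
\]
With $(g^{-1})_x=g_x^{-1}$ and $y'=(g^{-1})_x(y)$, this is $(g^{-1})_xh_y=h_{(g^{-1})_x(y)}(g^{-1})_x$, which is criterion~3 for $(g^{-1},h)$.

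\emph{The pair $(g^{-1},h^{-1})$.} Apply the first case to the distributive pair $(g^{-1},h)$ just obtained.

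The only point needing care is the preliminary identification $(g^{-1})_x=(g_x)^{-1}$. Without it, criterion~3 for the inverted operations could not be translated into statements about the maps $g_x$ and $h_y$. Once that identification is in place, the remaining steps are mechanical manipulations of the single identity $g_xh_y=h_{g_x(y)}g_x$. Continuity plays no role, since invertibility in $H_2(X)$ is given.
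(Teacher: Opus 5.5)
Your proof is correct and follows the paper's argument: you handle $(g,h^{-1})$ by inverting $g_xh_y=h_{g_x(y)}g_x$, and you obtain $(g^{-1},h^{-1})$ by applying an earlier case to an already-established pair. The differences are cosmetic. You make explicit the identity $(g^{-1})_x=(g_x)^{-1}$, which the paper uses tacitly. You also treat $(g^{-1},h)$ in operator form via $y'=g_x^{-1}(y)$, which is the same substitution as the paper's pointwise computation with $y=g(x,g^{-1}(x,y))$ and $z=g(x,g^{-1}(x,z))$ in \eqref{eq1-1}.
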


\begin{proof}
Let $(g,h)$ be a distributive pair. By Proposition~\ref{th-criteria}, we have
$g_xh_y=h_{g_x(y)}g_x$ for arbitrary $x,y\in X$. This equality implies that
$h_y^{-1}=g_x^{-1}h_{g_x(y)}^{-1}g_x$, or
$g_xh_y^{-1}=h_{g_x(y)}^{-1}g_x$. By the same Proposition~\ref{th-criteria},
this means that $(g,h^{-1})$ is a distributive pair.

Now observe that
\begin{multline*}
g^{-1}(x,h(y,z))
=
g^{-1}(x,h(g(x,g^{-1}(x,y)),g(x,g^{-1}(x,z))))\\
=
g^{-1}(x,g(x,h(g^{-1}(x,y),g^{-1}(x,z))))
=
h(g^{-1}(x,y),g^{-1}(x,z)).
\end{multline*}
This means that $(g^{-1},h)$ is a distributive pair.

The distributivity of the pair $(g^{-1},h^{-1})$ now follows from the
distributivity of the pair $(g,h^{-1})$ or of the pair $(g^{-1},h)$.
\end{proof}

\begin{proposition}\label{cor-obratim-2}
Let $g,h,k\in H_2(X)$ be invertible binary operations of the space $X$. If
$(g,h)$ and $(g,k)$ are distributive pairs, then $(g,hk)$ is also a
distributive pair.
\end{proposition}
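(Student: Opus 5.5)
We use criterion~3 of Proposition~\ref{th-criteria}. Our assumption is that
\[
g_xh_y=h_{g_x(y)}g_x \quad\text{and}\quad g_xk_y=k_{g_x(y)}g_x \qquad (x,y\in X),
\]
and we must show that $g_x(hk)_y=(hk)_{g_x(y)}g_x$ for all $x,y\in X$. By \eqref{eq-g_xh_x} the left side is $g_xh_yk_y$ and the right side is $h_{g_x(y)}k_{g_x(y)}g_x$. This reduces the claim to a computation with self-maps of $X$.

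First we apply the distributivity of $(g,h)$ to move $g_x$ past $h_y$:
\[
g_xh_yk_y=h_{g_x(y)}\,g_xk_y .
\]
Next we apply the distributivity of $(g,k)$ with the same base points $x,y$, which gives $g_xk_y=k_{g_x(y)}g_x$. Therefore
\[
g_xh_yk_y=h_{g_x(y)}k_{g_x(y)}g_x=(hk)_{g_x(y)}g_x,
\]
where the last equality is again \eqref{eq-g_xh_x}, applied at the point $g_x(y)$. Criterion~3 then shows that $(g,hk)$ is a distributive pair.

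The only point needing care is that both uses of the hypothesis involve the same subscript $y$ on $h$ and $k$. This holds because $(hk)_y=h_yk_y$ evaluates both factors at the same first argument. If some subtlety appeared there, we would instead verify identity~\eqref{eq1-1} directly:
\[
g(x,hk(y,z))=g(x,h(y,k(y,z)))=h(g(x,y),g(x,k(y,z)))=h(g(x,y),k(g(x,y),g(x,z))),
\]
using the two hypotheses in turn. The last expression equals $hk(g(x,y),g(x,z))$, as required. We expect no real obstacle; the argument is a two-line calculation.
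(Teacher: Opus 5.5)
Your proof is correct and matches the paper's argument: you rewrite $(hk)_y=h_yk_y$ via \eqref{eq-g_xh_x} and move $g_x$ first past $h_y$ and then past $k_y$, using criterion~3 of Proposition~\ref{th-criteria}. Your direct check of \eqref{eq1-1} is also correct; it is the same computation written pointwise.
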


\begin{proof}
By Proposition~\ref{th-criteria}, for arbitrary $x,y\in X$ the following
equalities hold:
\[
g_xh_y=h_{g_x(y)}g_x
\quad\text{and}\quad
g_xk_y=k_{g_x(y)}g_x.
\]
It is not hard to see that then
$g_xh_yk_y=h_{g_x(y)}k_{g_x(y)}g_x$, or, taking into account~\eqref{eq-g_xh_x},
\[
g_x(hk)_y=(hk)_{g_x(y)}g_x.
\]
Therefore, by Proposition~\ref{th-criteria}, $(g,hk)$ is a distributive pair.
\end{proof}

The following two theorems immediately follow from Propositions \ref{th-criteria},
\ref{cor-obratim}, and \ref{cor-obratim-2}.

\begin{theorem}\label{th-10}
Let $D\subset H_2(X)$ be a distributive subset. Then the subgroup of the group
$H_2(X)$ generated by the set $D$ is a distributive subgroup.
\end{theorem}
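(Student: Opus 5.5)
Let $\langle D\rangle$ denote the subgroup of $H_2(X)$ generated by $D$. Every element of $\langle D\rangle$ is a finite product $d_1^{\varepsilon_1}\cdots d_n^{\varepsilon_n}$ with $d_i\in D$, $\varepsilon_i=\pm1$ (the empty product being the unit $e$). We must show that every ordered pair $(u,v)$ of such elements is distributive. The plan is a two-stage induction on word length, first in the second coordinate, then in the first, using Propositions~\ref{cor-obratim} and~\ref{cor-obratim-2}.

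\emph{Step 1 (second coordinate).} Fix $g\in D$ and consider $S_g=\{v\in H_2(X): (g,v)\ \text{is distributive}\}$. The set $S_g$ contains $e$ trivially, since $g(x,z)=e(g(x,y),g(x,z))$. By Proposition~\ref{cor-obratim} it is closed under inverses, and by Proposition~\ref{cor-obratim-2} under products. Hence $S_g$ is a subgroup. Since $D$ is distributive, $D\subset S_g$, and so $\langle D\rangle\subset S_g$. The same reasoning applies to $g^{-1}$ with $g\in D$: by Proposition~\ref{cor-obratim}, $(g^{-1},d)$ is distributive for every $d\in D$.

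\emph{Step 2 (first coordinate).} Proposition~\ref{cor-obratim-2} only handles products in the second slot, so I need its mirror statement: if $(g,h)$ and $(k,h)$ are distributive pairs, then $(gk,h)$ is distributive. I expect this to be the only step needing real work. I would verify it through criterion~3 of Proposition~\ref{th-criteria}. Using $(gk)_x=g_xk_x$ and $(gk)_x(y)=g_x(k_x(y))$, we compute
\[
(gk)_xh_y=g_xk_xh_y=g_xh_{k_x(y)}k_x=h_{g_x(k_x(y))}g_xk_x=h_{(gk)_x(y)}(gk)_x ,
\]
which is exactly the required identity. For inverses in the first slot, Proposition~\ref{cor-obratim} already gives that $(g,h)$ distributive implies $(g^{-1},h)$ distributive.

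\emph{Conclusion.} Now fix $v\in\langle D\rangle$ and set $T_v=\{u: (u,v)\ \text{is distributive}\}$. The set $T_v$ contains $e$, since $y\mapsto$ trivial check gives $e(x,v(y,z))=v(y,z)=v(e(x,y),e(x,z))$. It is closed under inverses by Proposition~\ref{cor-obratim} and under products by Step~2, so it is a subgroup. By Step~1, $D\subset T_v$, and therefore $\langle D\rangle\subset T_v$. As $v\in\langle D\rangle$ was arbitrary, every pair of elements of $\langle D\rangle$ is distributive, which proves Theorem~\ref{th-10}.
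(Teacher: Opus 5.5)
Your proof is correct and is essentially the argument the paper intends when it says the theorem follows immediately from Propositions~\ref{th-criteria}, \ref{cor-obratim} and~\ref{cor-obratim-2}: show that the second slot is closed under the group operations, then show the same for the first slot. Your Step~2 (the mirror of Proposition~\ref{cor-obratim-2}, checked through criterion~3) is exactly the use of Proposition~\ref{th-criteria} that the paper leaves unstated, and your computation there is right.
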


\begin{theorem}\label{th-11}
Let $X$ be a binary $G$-space. Then the following statements are equivalent:

1. The action of the group $G$ on $X$ has the distributivity property.

2. The map $g_x:X\to X$ is a biequivariant homeomorphism for all $g\in G$
and $x\in X$.

3. For all $g,h\in G$ and $x,y\in X$, the equality
\[
g_xh_y=h_{g_x(y)}g_x
\]
holds.
\end{theorem}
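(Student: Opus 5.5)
The plan is to run the equivalences through the images $\alpha_g=\theta(g)\in H_2(X)$, so that Proposition~\ref{th-criteria} applies directly. The action is distributive exactly when, for all $g,h\in G$, the pair $(\alpha_g,\alpha_h)$ is a distributive pair in $H_2(X)$. Also $(\alpha_g)_x=g_x$ by definition.

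\textbf{$1\Leftrightarrow 3$.} Applying the equivalence of items 1 and 3 of Proposition~\ref{th-criteria} to each pair $(\alpha_g,\alpha_h)$ gives this equivalence immediately. No group-theoretic closure (Theorem~\ref{th-10}) is needed here, since every pair of elements of $G$ is quantified over anyway.

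\textbf{$1\Leftrightarrow 2$.} First, $g_x$ is always a homeomorphism, whether or not the action is distributive. It is continuous as a restriction of $\alpha$. Using $gh(x,y)=g(x,h(x,y))$ we get $(g^{-1})_x g_x=(g^{-1}g)_x=e_x=\mathrm{id}_X$, and similarly in the other order, so $(g_x)^{-1}=(g^{-1})_x$ is continuous.

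Next, biequivariance of $g_x$ means
\[
g_x(h(y,z))=h(g_x(y),g_x(z)) \quad\text{for all } h\in G,\ y,z\in X,
\]
that is, $g(x,h(y,z))=h(g(x,y),g(x,z))$. This is literally identity \eqref{eq1-1}. So "all $g_x$ biequivariant" is the same statement as distributivity, which gives $1\Leftrightarrow 2$.

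\textbf{Main obstacle.} There is no real difficulty; the only care needed is a discrepancy in item 2 of Proposition~\ref{th-criteria}. There the second argument on the right is written $g_y(z)$ rather than $g_x(z)$, which looks like a misprint. So I would derive $1\Leftrightarrow 2$ directly from the definitions as above, rather than cite that item.
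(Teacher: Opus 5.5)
Your proof is correct and takes essentially the paper's route. The paper only says Theorem~\ref{th-11} follows immediately from Proposition~\ref{th-criteria} applied to $\alpha_g,\alpha_h$, which is your $1\Leftrightarrow 3$. Your direct unwinding of biequivariance for $1\Leftrightarrow 2$ is what item~2 of that proposition says once the misprint $g_y(z)$ is corrected to $g_x(z)$, as you rightly suspected; your check that $(g_x)^{-1}=(g^{-1})_x$, so each $g_x$ is a homeomorphism, supplies a detail the paper leaves implicit.
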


\section{Stationary Subgroups and Transitive Binary \texorpdfstring{$G$}{G}-Spaces}

Let $X$ be a binary $G$-space, and let $x,y\in X$ be arbitrary points. The set
\[
G_{(x,y)}=\{g\in G,\ g(x,y)=y\}
\]
is a closed subgroup of the group $G$. This subgroup is called the
\emph{stationary subgroup} of the pair $(x,y)$. The subgroup $G_{(x,x)}$ is
called the \emph{stationary subgroup of the point} $x$.

\begin{proposition}\label{prop-10}
Let $f:X\to Y$ be a biequivariant map between binary $G$-spaces $X$ and
$Y$. Then
\begin{equation*}
G_{(x,y)}\subset G_{(f(x),f(y))}
\end{equation*}
for any $x,y\in X$.
\end{proposition}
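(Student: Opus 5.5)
The plan is to check the inclusion element by element, using only the definition of a biequivariant map and the definition of the stationary subgroup $G_{(x,y)}$. No earlier structural result such as Proposition~\ref{th-criteria} or Theorem~\ref{th-11} should be needed. In particular, distributivity of the binary actions on $X$ or $Y$ plays no role.

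Fix $x,y\in X$ and take an arbitrary $g\in G_{(x,y)}$, so that $g(x,y)=y$ in $X$. Apply $f$ to both sides of this equality to get $f(g(x,y))=f(y)$. Biequivariance of $f$ rewrites the left-hand side as $g(f(x),f(y))$, computed with the binary action $\beta$ on $Y$. Hence
\[
g(f(x),f(y))=f(y),
\]
which says precisely that $g\in G_{(f(x),f(y))}$. Since $g$ was arbitrary, the inclusion $G_{(x,y)}\subset G_{(f(x),f(y))}$ follows.

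There is no genuine obstacle here. The only point needing care is to keep track of which action the symbol $g(\cdot,\cdot)$ denotes on each side: $\alpha$ on $X$ and $\beta$ on $Y$. The biequivariance identity $f(\alpha(g,x,y))=\beta(g,f(x),f(y))$ is exactly what transports the fixed-point condition from the pair $(x,y)$ to the pair $(f(x),f(y))$.
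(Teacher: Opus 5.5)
Your argument is correct and is exactly the elementary verification the paper has in mind when it says the proposition ``can be proved elementary'' and omits the proof: apply $f$ to $g(x,y)=y$ and use biequivariance to get $g(f(x),f(y))=f(y)$. You are also right that distributivity plays no role here.
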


This proposition can be proved elementary.

\begin{proposition}\label{prop-2}
Let $X$ be an arbitrary binary $G$-space, and let $x,y\in X$ and $g\in G$ be
arbitrary. Then the equality
\begin{equation}\label{G{(x, g(x,x'))}}
G_{(x,g(x,y))}=gG_{(x,y)}g^{-1}
\end{equation}
holds.
\end{proposition}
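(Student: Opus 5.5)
The plan is to prove the two inclusions separately, using only the axioms of a binary action: $gh(x,y)=g(x,h(x,y))$ and $e(x,y)=y$. The first variable stays fixed throughout. In the notation $g_x(y)=g(x,y)$, the first axiom reads $(gh)_x=g_xh_x$, as in \eqref{eq-g_xh_x}. Since $e_x=\mathrm{id}_X$, each $g_x$ is a bijection of $X$ with inverse $(g^{-1})_x$. Moreover, $G_{(x,y)}$ is exactly the stabilizer of the point $y$ under the ordinary (left) action $g\mapsto g_x$ of $G$ on $X$ obtained by freezing $x$. The statement is therefore the familiar fact that stabilizers along an orbit are conjugate, and the proof follows the standard one.

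For the inclusion $gG_{(x,y)}g^{-1}\subset G_{(x,g(x,y))}$, take $h\in G_{(x,y)}$, so $h(x,y)=y$. Then
\[
(ghg^{-1})(x,g(x,y))=g\bigl(x,h(x,g^{-1}(x,g(x,y)))\bigr).
\]
The inner expression is $g^{-1}(x,g(x,y))=(g^{-1}g)(x,y)=e(x,y)=y$. Hence the whole expression equals $g(x,h(x,y))=g(x,y)$, which shows $ghg^{-1}\in G_{(x,g(x,y))}$.

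For the reverse inclusion, apply the inclusion just proved with $g^{-1}$ in place of $g$ and $g(x,y)$ in place of $y$. This gives
\[
g^{-1}G_{(x,g(x,y))}\,g\subset G_{(x,g^{-1}(x,g(x,y)))}=G_{(x,y)}.
\]
Conjugating by $g$ then yields $G_{(x,g(x,y))}\subset gG_{(x,y)}g^{-1}$, and the two inclusions together give \eqref{G{(x, g(x,x'))}}.

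There is no real obstacle here. The only point needing care is to expand the triple product $ghg^{-1}$ in the correct order, applying the axiom twice with the first argument $x$ held fixed. This step uses neither distributivity nor continuity.
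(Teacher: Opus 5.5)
Your proof is correct and follows the paper's argument essentially verbatim: you verify $gG_{(x,y)}g^{-1}\subset G_{(x,g(x,y))}$ by direct computation, then get the reverse inclusion by applying it with $g^{-1}$ and $g(x,y)$ in place of $g$ and $y$. Your remark that $G_{(x,y)}$ is the stabilizer of $y$ under the ordinary action $g\mapsto g_x$ is a nice framing the paper leaves implicit, but the argument is the same.
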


\begin{proof}
Let $g_0\in G_{(x,y)}$ be an arbitrary element. Then, for any $g\in G$, we have
\[
gg_0g^{-1}(x,g(x,y))=gg_0(x,y)=g(x,g_0(x,y))=g(x,y).
\]
Hence,
\[
gG_{(x,y)}g^{-1}\subset G_{(x,g(x,y))}.
\]

In particular, the last inclusion implies that
\[
g^{-1}G_{(x,g(x,y))}g
\subset
G_{(x,g^{-1}(x,g(x,y)))}=G_{(x,y)}.
\]
From this we obtain the inverse inclusion
\[
G_{(x,g(x,y))}\subset gG_{(x,y)}g^{-1}.
\]
\end{proof}

\begin{proposition}\label{prop-1}
Let $X$ be a distributive binary $G$-space, and let $x\in X$ be an arbitrary
point. Then the stationary subgroup $G_{(x,x)}$ is a closed normal subgroup of
the group $G$.
\end{proposition}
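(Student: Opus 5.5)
Closedness is already noted in the paper (the stationary subgroup of any pair is closed), so the only thing to prove is normality. I will use Proposition~\ref{prop-2} with $y=x$: for every $g\in G$,
\[
gG_{(x,x)}g^{-1}=G_{(x,g(x,x))}.
\]
It then suffices to show $G_{(x,g(x,x))}=G_{(x,x)}$ for all $g\in G$. This would follow if, in a distributive binary $G$-space, the stationary subgroup $G_{(x,y)}$ depends only on $x$.

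\emph{Key step.} I will show that $h\in G_{(x,x)}$ forces $h(x,y)=y$ for every $y$ of the form $g(x,x)$. Take $h\in G_{(x,x)}$ and $g\in G$, and apply distributivity \eqref{eq1-1} to the pair $(g,h)$ with all three points equal to $x$:
\[
g(x,h(x,x))=h(g(x,x),g(x,x)).
\]
Since $h(x,x)=x$, this reads $h(g(x,x),g(x,x))=g(x,x)$. Unfortunately this only says $h\in G_{(y,y)}$ with $y=g(x,x)$, not $h\in G_{(x,y)}$.

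So I will instead use distributivity with $h$ in the first slot. Applying \eqref{eq1-1} to the pair $(h,g)$ at $(x,x,x)$ gives
\[
h(x,g(x,x))=g(h(x,x),h(x,x))=g(x,x).
\]
Hence $h(x,g(x,x))=g(x,x)$, that is, $h\in G_{(x,g(x,x))}$. This proves $G_{(x,x)}\subset G_{(x,g(x,x))}=gG_{(x,x)}g^{-1}$ for every $g$.

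Replacing $g$ by $g^{-1}$ gives $G_{(x,x)}\subset g^{-1}G_{(x,x)}g$, so $gG_{(x,x)}g^{-1}\subset G_{(x,x)}$. The two inclusions give equality, so $G_{(x,x)}$ is normal.

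The only delicate point is choosing the correct order of the pair in \eqref{eq1-1}: $h$, the stabilizing element, must be the outer operation. No further obstacle is expected, since the distributivity identity holds for all ordered pairs $(g,h)$ of elements of $G$.
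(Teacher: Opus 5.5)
Your proof is correct and is essentially the paper's: in both, the decisive step is distributivity at $(x,x,x)$ with the stabilizing element as the \emph{outer} operation, which gives $h(x,k(x,x))=k(x,x)$ for $h\in G_{(x,x)}$. The paper takes $k=g^{-1}$ and concludes by the direct computation $gg_0g^{-1}(x,x)=g(x,g_0(x,g^{-1}(x,x)))=g(x,g^{-1}(x,x))=x$. You instead take $k=g$ and let Proposition~\ref{prop-2} handle the conjugation, which incidentally gives one inclusion of \eqref{eq-Gxgx} directly rather than as a corollary of normality.
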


\begin{proof}
Let $g_0\in G_{(x,x)}$ and $g\in G$ be arbitrary elements. By distributivity,
we have
\begin{multline*}
gg_0g^{-1}(x,x)
=
g(x,g_0(x,g^{-1}(x,x)))\\
=
g(x,g^{-1}(g_0(x,x),g_0(x,x)))
=
g(x,g^{-1}(x,x))
=
x,
\end{multline*}
that is, $gg_0g^{-1}\in G_{(x,x)}$. Hence,
$gG_{(x,x)}g^{-1}=G_{(x,x)}$, which means that $G_{(x,x)}$ is a normal
subgroup.
\end{proof}

\begin{proposition}\label{prop-11}
Let $X$ be a distributive binary $G$-space. Then, for arbitrary
$x,y,z\in X$ and $g\in G$, the following equalities hold:
\begin{equation}\label{G(h(x,y)}
G_{(g(x,y),g(x,z))}=G_{(y,z)},
\end{equation}
\begin{equation}\label{eq-Gxgx}
G_{(x,g(x,x))}=G_{(x,x)},
\end{equation}
\begin{equation}\label{eq-3}
G_{(g(x,x),x)}=G_{(x,x)}.
\end{equation}
\end{proposition}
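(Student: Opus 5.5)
The first equality \eqref{G(h(x,y)} is the key one; I will prove it by two inclusions, using the distributivity identity \eqref{eq1-1} directly. Take $h\in G_{(y,z)}$, so $h(y,z)=z$. By distributivity,
\[
h(g(x,y),g(x,z))=g(x,h(y,z))=g(x,z),
\]
so $h\in G_{(g(x,y),g(x,z))}$. This gives $G_{(y,z)}\subset G_{(g(x,y),g(x,z))}$ for all $x,y,z\in X$ and $g\in G$. For the reverse inclusion, apply the same inclusion with $g^{-1}$ in place of $g$ and with the points $g(x,y),g(x,z)$ in place of $y,z$. Since $g^{-1}(x,g(x,y))=y$ by the action axioms, this yields
\[
G_{(g(x,y),g(x,z))}\subset G_{(g^{-1}(x,g(x,y)),\,g^{-1}(x,g(x,z)))}=G_{(y,z)}.
\]
Equivalently, one could note that $g_x$ is a biequivariant homeomorphism by Theorem~\ref{th-11} and apply Proposition~\ref{prop-10} to $g_x$ and to its inverse $(g^{-1})_x$.

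The other two equalities are specializations of \eqref{G(h(x,y)}. For \eqref{eq-Gxgx}, I take $y=z=x$ and use a suitable $g$. Setting $y=z=x$ in \eqref{G(h(x,y)} gives $G_{(g(x,x),g(x,x))}=G_{(x,x)}$, which is about the point $g(x,x)$ and not yet the pair $(x,g(x,x))$.

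So I will instead use Proposition~\ref{prop-2}, which gives $G_{(x,g(x,x))}=gG_{(x,x)}g^{-1}$. By Proposition~\ref{prop-1}, $G_{(x,x)}$ is normal in $G$, so the right-hand side equals $G_{(x,x)}$. This proves \eqref{eq-Gxgx}.

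For \eqref{eq-3}, I apply \eqref{G(h(x,y)} with $g^{-1}$ in place of $g$, the point $g(x,x)$ as the base point, and $y=g(x,x)$, $z=x$. This gives
\[
G_{(g(x,x),x)}=G_{\left(g^{-1}(g(x,x),g(x,x)),\,g^{-1}(g(x,x),x)\right)}.
\]
This does not obviously simplify. A cleaner route is to apply \eqref{G(h(x,y)} with $g^{-1}$ and base point $x$ to the pair $(g(x,x),x)$:
\[
G_{(g(x,x),x)}=G_{(g^{-1}(x,g(x,x)),\,g^{-1}(x,x))}=G_{(x,\,g^{-1}(x,x))}.
\]
By \eqref{eq-Gxgx} applied to $g^{-1}$, the last group equals $G_{(x,x)}$.

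The main obstacle I expect is choosing the right substitutions in \eqref{eq-3}, since the naive specialization leads nowhere. The route through $g^{-1}$ with base point $x$, combined with \eqref{eq-Gxgx}, is the one I will commit to.
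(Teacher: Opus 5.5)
Your proof is correct and follows the paper's argument. You get \eqref{G(h(x,y)} from distributivity, \eqref{eq-Gxgx} from Propositions~\ref{prop-2} and~\ref{prop-1}, and \eqref{eq-3} by combining \eqref{G(h(x,y)} with \eqref{eq-Gxgx} for $g^{-1}$. The differences are only cosmetic: you get the reverse inclusion in \eqref{G(h(x,y)} by substituting $g^{-1}$ rather than cancelling the injective map $g_x$, and for \eqref{eq-3} you apply \eqref{G(h(x,y)} with $g^{-1}$ to the pair $(g(x,x),x)$, whereas the paper applies it with $g$ to the pair $(x,g^{-1}(x,x))$, which is the same identity read in the other direction.
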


\begin{proof}
Let us prove equality \eqref{G(h(x,y)}. Suppose that $g_0\in G_{(y,z)}$. Then
\begin{equation*}\label{eq-distr}
g_0(g(x,y),g(x,z))=g(x,g_0(y,z))=g(x,z).
\end{equation*}
This means that $g_0\in G_{(g(x,y),g(x,z))}$.

Conversely, let $g_0\in G_{(g(x,y),g(x,z))}$, that is,
$g_0(g(x,y),g(x,z))=g(x,z)$. Then
\begin{equation*}\label{eq-distr-1}
g(x,g_0(y,z))=g_0(g(x,y),g(x,z))=g(x,z).
\end{equation*}
Hence, $g_0(y,z)=z$, that is, $g_0\in G_{(y,z)}$.

Equality \eqref{eq-Gxgx} immediately follows from formula
\eqref{G{(x, g(x,x'))}} and Proposition~\ref{prop-1}:
\[
G_{(x,g(x,x))}=gG_{(x,x)}g^{-1}=G_{(x,x)}.
\]

Let us prove \eqref{eq-3}. By \eqref{G(h(x,y)} and \eqref{eq-Gxgx}, we have
\[
G_{(g(x,x),x)}
=
G_{(g(x,x),g(x,g^{-1}(x,x)))}
=
G_{(x,g^{-1}(x,x))}
=
G_{(x,x)}.
\]
\end{proof}

\begin{definition}\label{def-1}
A binary action of a group $G$ on $X$ is called \emph{transitive} if
$G(x,x)=X$ for all $x\in X$. In this case $X$ is called a \emph{transitive}
binary $G$-space.
\end{definition}

\begin{proposition}\label{prop-3}
Let $H$ be a closed normal subgroup of a group $G$. The map
$\alpha:G\times G/H\times G/H\to G/H$, defined by the formula
\begin{equation}\label{eq-g(kH,lH)}
g(kH,lH)=(kgk^{-1}l)H,
\end{equation}
where $g(kH,lH)=\alpha(g,kH,lH)$, is a distributive binary action of the group
$G$ on $G/H$.
\end{proposition}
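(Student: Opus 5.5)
Our plan is to verify directly the three things required: that $\alpha$ is well defined, that it is continuous, and that it satisfies the binary action axioms and the distributivity identity \eqref{eq1-1}. The crux is well-definedness, and this is where normality of $H$ enters. Suppose $kH=k'H$ and $lH=l'H$, so $k'=kh_1$ and $l'=lh_2$ with $h_1,h_2\in H$. Then
\[
k'gk'^{-1}l'=kh_1gh_1^{-1}k^{-1}lh_2 .
\]
Since $H$ is normal, $h_1gh_1^{-1}=g\,(g^{-1}h_1g)h_1^{-1}\in gH$. Hence $k'gk'^{-1}l'\in kgHk^{-1}lH=kgk^{-1}lH$, where we use normality again to move $H$ past $k^{-1}l$. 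So the right-hand side of \eqref{eq-g(kH,lH)} depends only on the cosets $kH$ and $lH$. We expect this step to be the main obstacle; everything else is formal.

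For continuity, consider the map $G\times G\times G\to G/H$ given by $(g,k,l)\mapsto (kgk^{-1}l)H$. It is continuous, being the composition of group operations with the quotient map $\pi:G\to G/H$. We then want to descend it along $\mathrm{id}\times\pi\times\pi$. Since $\pi$ is an open map, $\mathrm{id}\times\pi\times\pi$ is open and surjective, hence a quotient map. A map that is constant on fibres then descends to a continuous map, which gives continuity of $\alpha$.

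The action axioms and distributivity are short computations.
\begin{itemize}
\item For the identity, $e(kH,lH)=(kek^{-1}l)H=lH$.
\item For composition, $g(kH,h(kH,lH))=g(kH,(khk^{-1}l)H)=(kgk^{-1}khk^{-1}l)H=(k\,gh\,k^{-1}l)H=gh(kH,lH)$.
\item For distributivity, take $x=kH$, $y=lH$, $z=mH$. The left side is
\[
g(x,h(y,z))=(kgk^{-1}lhl^{-1}m)H .
\]
On the right side, $g(x,y)=(kgk^{-1}l)H$ and $g(x,z)=(kgk^{-1}m)H$. Applying \eqref{eq-g(kH,lH)} with representative $p=kgk^{-1}l$ gives
\[
h(g(x,y),g(x,z))=\bigl(p\,h\,p^{-1}\,kgk^{-1}m\bigr)H=(kgk^{-1}l\,h\,l^{-1}kg^{-1}k^{-1}kgk^{-1}m)H=(kgk^{-1}lhl^{-1}m)H .
\]
\end{itemize}
The two sides coincide, which establishes distributivity.

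Alternatively, one could note that the action is induced from the conjugate left translation \eqref{eq-H on G} on $G$. Since $\pi$ is biequivariant in the appropriate sense, distributivity passes to the quotient. The direct computation above is just as short, so we plan to use it.
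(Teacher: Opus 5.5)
Your proof is correct and takes the same route as the paper. Both show well-definedness via normality of $H$ (you change both representatives at once by coset arithmetic, while the paper first replaces $n$ by $l$ and then checks $(mgm^{-1}l)^{-1}(kgk^{-1}l)\in H$), and both verify distributivity by the identical direct computation. You additionally check the action axioms and continuity, via the open quotient map $\mathrm{id}\times\pi\times\pi$, which the paper only dismisses as easy to check.
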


\begin{proof}
First we prove that formula \eqref{eq-g(kH,lH)} defines a well-defined
map. Suppose that $kH=mH$ and $lH=nH$, where $k,l,m,n\in G$. We show that
$g(kH,lH)=g(mH,nH)$, that is,
\[
(kgk^{-1}l)H=(mgm^{-1}n)H,
\]
or, equivalently,
\[
(kgk^{-1}l)H=(mgm^{-1}l)H.
\]
For this purpose, it is enough to prove that
\[
(mgm^{-1}l)^{-1}(kgk^{-1}l)\in H.
\]
Indeed, let $m=kh_1$ for some $h_1\in H$. Then
\begin{multline*}
(mgm^{-1}l)^{-1}(kgk^{-1}l)
=
l^{-1}mg^{-1}m^{-1}kgk^{-1}l
=
l^{-1}kh_1g^{-1}h_1^{-1}k^{-1}kgk^{-1}l\\
=
l^{-1}k(h_1g^{-1}h_1^{-1}g)k^{-1}l
=
l^{-1}kh_2k^{-1}l
\in H,
\end{multline*}
because the element $h_2=h_1g^{-1}h_1^{-1}g$ belongs to $H$.

It is easy to check that formula \eqref{eq-g(kH,lH)} defines a binary action
of the group $G$ on $G/H$. Let us prove that this action is distributive, that
is,
\[
g(kH,g'(lH,mH))=g'(g(kH,lH),g(kH,mH)).
\]
Indeed,
\[
g(kH,g'(lH,mH))
=
g(kH,lg'l^{-1}mH)
=
kgk^{-1}lg'l^{-1}mH.
\]
On the other hand,
\begin{multline*}
g'(g(kH,lH),g(kH,mH))
=
g'(kgk^{-1}lH,kgk^{-1}mH)\\
=
kgk^{-1}lg'(kgk^{-1}l)^{-1}kgk^{-1}mH
=
kgk^{-1}lg'l^{-1}mH.
\end{multline*}
\end{proof}

The space $G/H$ with the binary action \eqref{eq-g(kH,lH)} of the group $G$ is
an example of a transitive distributive binary $G$-space.

\begin{theorem}\label{prop-4}
Any transitive distributive binary $G$-space $X$, where $G$ is a compact group,
is biequivariantly homeomorphic to the space of cosets $G/H$ by some closed
normal subgroup $H$ of the group $G$, with the binary action~\eqref{eq-g(kH,lH)}.
\end{theorem}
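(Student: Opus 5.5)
We take the cosets with respect to the stationary subgroup of a fixed base point, and map each coset to the point it moves the base point to. Fix a point $x_0\in X$ and put $H=G_{(x_0,x_0)}$. By Proposition~\ref{prop-1}, $H$ is a closed normal subgroup of $G$, so Proposition~\ref{prop-3} equips $G/H$ with the distributive binary action \eqref{eq-g(kH,lH)}. Define
\[
f:G/H\to X,\qquad f(kH)=k(x_0,x_0).
\]

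\textbf{Bijectivity.} The map $f$ is well defined: for $h\in H$ we have
\[
kh(x_0,x_0)=k(x_0,h(x_0,x_0))=k(x_0,x_0).
\]
It is injective: if $k(x_0,x_0)=m(x_0,x_0)$, then
\[
m^{-1}k(x_0,x_0)=m^{-1}(x_0,k(x_0,x_0))=m^{-1}(x_0,m(x_0,x_0))=x_0,
\]
so $m^{-1}k\in H$, i.e.\ $kH=mH$. It is surjective because transitivity (Definition~\ref{def-1}) gives $G(x_0,x_0)=X$.

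\textbf{Continuity.} The composite $G\to X$, $k\mapsto \alpha(k,x_0,x_0)$, is continuous and constant on cosets of $H$. Since $G\to G/H$ is a quotient map, $f$ is continuous.

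\textbf{Biequivariance.} We must show
\[
f\bigl(kgk^{-1}lH\bigr)=g\bigl(k(x_0,x_0),\,l(x_0,x_0)\bigr).
\]
Expanding the left side with the action axiom gives
\[
kgk^{-1}l(x_0,x_0)=k\bigl(x_0,\;g(x_0,b)\bigr),\qquad b=k^{-1}\bigl(x_0,l(x_0,x_0)\bigr).
\]
By distributivity, i.e.\ Theorem~\ref{th-11} (the map $k_{x_0}$ is biequivariant), we get
\[
k(x_0,g(x_0,b))=g\bigl(k(x_0,x_0),\,k(x_0,b)\bigr).
\]
Finally $k(x_0,b)=k(x_0,k^{-1}(x_0,l(x_0,x_0)))=l(x_0,x_0)$, which is exactly the right side. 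This is the only place distributivity of $X$ is needed beyond Proposition~\ref{prop-1}.

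\textbf{Homeomorphism.} Here compactness of $G$ enters: $G/H$ is compact, so $f$ is a continuous bijection from a compact space. If $X$ is Hausdorff (the standing convention, which I would make explicit), $f$ is closed and hence a homeomorphism. This is the step I expect to be the main obstacle. Without a separation axiom on $X$ the conclusion can fail, so the proof must either invoke that convention or add the hypothesis. Everything else is the routine verification above.
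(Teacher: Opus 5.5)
Your proof is correct and is essentially the paper's argument. You use the same $H=G_{(x_0,x_0)}$, the same map $f(kH)=k(x_0,x_0)$, the same distributivity computation for biequivariance, and the same compact-to-Hausdorff step; you also supply the well-definedness, injectivity, surjectivity and continuity checks that the paper dismisses as ``not hard.''

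Your Hausdorff caveat is justified, because the paper uses that property of $X$ without stating it. For instance, take $G=\mathbb{Z}/2$ acting on the indiscrete two-point space by $g(x,y)=g+y$: this binary action is transitive and distributive, but the space is not homeomorphic to any $G/H$.
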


\begin{proof}
Let $X$ be an arbitrary transitive distributive binary $G$-space, and let
$x_0\in X$ be a fixed point. By Proposition~\ref{prop-1}, the stationary
subgroup $G_{(x_0,x_0)}=H$ is a closed normal subgroup of the group $G$.

Consider the space of cosets $G/H$ with the binary action
\eqref{eq-g(kH,lH)}. Define the map $f:G/H\to X$ by the formula
\begin{equation}\label{eq-f(gH)}
f(gH)=g(x_0,x_0)
\end{equation}
for an arbitrary $gH\in G/H$. It is not hard to prove that $f$ is continuous
and bijective. Since a continuous bijective map from a compact space onto a
Hausdorff space is a homeomorphism, $f$ is a homeomorphism.

It remains to prove that $f$ is a biequivariant map, that is, the equality
\[
f(g(kH,lH))=g(f(kH),f(lH))
\]
holds for all $g,k,l\in G$. Taking into account \eqref{eq-g(kH,lH)},
\eqref{eq-f(gH)}, and the distributivity of the action of the group $G$ on
$X$, we obtain
\begin{align*}
g(f(kH),f(lH))
&= g\bigl(k(x_0,x_0),l(x_0,x_0)\bigr) \\
&= g\bigl(k(x_0,x_0),k(x_0,k^{-1}l(x_0,x_0))\bigr) \\
&= k\bigl(x_0,g(x_0,k^{-1}l(x_0,x_0))\bigr) \\
&= k\bigl(x_0,gk^{-1}l(x_0,x_0)\bigr) \\
&= kgk^{-1}l(x_0,x_0) \\
&= f\bigl((kgk^{-1}l)H\bigr) \\
&= f\bigl(g(kH,lH)\bigr).
\end{align*}
\end{proof}

The following theorem immediately follows from Proposition~\ref{prop-3} and
Theorem~\ref{prop-4}.

\begin{theorem}
Let a binary action of a compact group $G$ on a space $X$ be effective,
distributive, and transitive. Then $X$ is biequivariantly homeomorphic to the
topological group $G$ with the binary action by means of the conjugate left
translation.
\end{theorem}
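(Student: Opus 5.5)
The plan is to deduce the statement directly from Theorem~\ref{prop-4}, then identify the normal subgroup $H$ with the trivial subgroup using effectiveness.

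By Theorem~\ref{prop-4}, since the action is transitive and distributive and $G$ is compact, $X$ is biequivariantly homeomorphic to $G/H$ with the action~\eqref{eq-g(kH,lH)}. Here $H=G_{(x_0,x_0)}$ for a fixed point $x_0\in X$, and the biequimorphism is $f(gH)=g(x_0,x_0)$. It therefore suffices to show that $H=\{e\}$. Then $G/H=G$, and the action~\eqref{eq-g(kH,lH)} becomes $g(k,l)=kgk^{-1}l$, which is exactly the conjugate left translation~\eqref{eq-H on G}.

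The key step is to show that $G_{(x_0,x_0)}\subset\ker\alpha$. Take $h\in H$ and arbitrary $x,y\in X$. By transitivity, write $x=k(x_0,x_0)$ and $y=l(x_0,x_0)$; equivalently, $x=f(kH)$ and $y=f(lH)$. Biequivariance of $f$ gives
\[
h(x,y)=f\bigl((khk^{-1}l)H\bigr).
\]
Since $H$ is normal (Proposition~\ref{prop-1}), $khk^{-1}\in H$, so $(khk^{-1}l)H=(l\,l^{-1}khk^{-1}l)H=lH$. Hence $h(x,y)=f(lH)=y$, so $h\in\ker\alpha=\{e\}$.

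Alternatively, the same conclusion can be reached intrinsically. By Proposition~\ref{prop-11} we have $G_{(x,y)}=G_{(x,x)}$ whenever $y=g(x,x)$, and by Proposition~\ref{prop-2} together with normality, all stationary subgroups coincide with $H$. Thus $H$ fixes every pair, and again $H\subset\ker\alpha$.

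The only point needing care is the identification of $G/\{e\}$ with $G$ as topological spaces, which is immediate. I expect no real obstacle: the main step is just the normality computation above showing that $H$ acts trivially.
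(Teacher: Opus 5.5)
Your proof is correct and follows the paper's route: the theorem is deduced from Theorem~\ref{prop-4}, and the step $H=G_{(x_0,x_0)}=\{e\}$ is left implicit in the paper but supplied by its final proposition $\ker\alpha=G_{(x,x)}$, which is proved as in your alternative (there it is the first equality of Proposition~\ref{prop-11}, not Proposition~\ref{prop-2}, that changes the first coordinate of the pair). Your main argument, $h(x,y)=f\bigl((khk^{-1}l)H\bigr)=f(lH)=y$ via the biequimorphism and the normality of $H$, is an equally valid and more self-contained way to get $H\subset\ker\alpha$.
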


As shown by the next proposition, a group $G$ with the binary action by means
of the conjugate left translation is the only distributive transitive binary
$G$-space with an effective binary action.

\begin{proposition}
Let $\alpha:G\times X^2\to X$ be a transitive distributive binary
$G$-space, and let $x\in X$ be an arbitrary point. Then the kernel of the
binary action $\alpha$ coincides with the stationary subgroup of the point
$x$:
\[
\ker\alpha=G_{(x,x)}.
\]
\end{proposition}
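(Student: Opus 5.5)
We prove the two inclusions $\ker\alpha\subset G_{(x,x)}$ and $G_{(x,x)}\subset\ker\alpha$ separately. The first is immediate: if $g\in\ker\alpha$, then $g(u,v)=v$ for all $u,v\in X$, in particular $g(x,x)=x$, so $g\in G_{(x,x)}$.

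For the reverse inclusion, fix $g_0\in G_{(x,x)}$. We must show $g_0(u,v)=v$ for arbitrary $u,v\in X$, i.e.\ that $g_0\in G_{(u,v)}$ for every pair $(u,v)$. By transitivity we write $u=k(x,x)$ and $v=l(x,x)$ with $k,l\in G$. The plan is to transport the pair $(u,v)$ back to a pair of the form $(x,\cdot)$ and then to $(x,x)$, using the stationary-subgroup identities already established.

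First, since $v=l(x,x)=k(x,k^{-1}l(x,x))$ and $u=k(x,x)$, formula \eqref{G(h(x,y)} of Proposition~\ref{prop-11} (with $g=k$, $y=x$, $z=k^{-1}l(x,x)$) gives
\[
G_{(u,v)}=G_{(k(x,x),\,k(x,k^{-1}l(x,x)))}=G_{(x,\,k^{-1}l(x,x))}.
\]
Next, with $m=k^{-1}l$, Proposition~\ref{prop-2} yields $G_{(x,m(x,x))}=mG_{(x,x)}m^{-1}$, and this equals $G_{(x,x)}$ because $G_{(x,x)}$ is normal by Proposition~\ref{prop-1} (this is exactly \eqref{eq-Gxgx}). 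Hence $G_{(u,v)}=G_{(x,x)}\ni g_0$, so $g_0(u,v)=v$. Since $u,v$ were arbitrary, $g_0\in\ker\alpha$.

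The only step needing care is the rewriting $v=k(x,k^{-1}l(x,x))$, which is just the action axiom $gh(x,y)=g(x,h(x,y))$ applied to $g=k$, $h=k^{-1}l$. After that, the argument is a direct application of Proposition~\ref{prop-11} and Proposition~\ref{prop-1}, so no real obstacle is expected. Compactness is not needed here; transitivity and distributivity suffice.
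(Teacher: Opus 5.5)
Your proof is correct and follows essentially the same route as the paper: write both points as $k(x,x)$ and $l(x,x)$, rewrite the second as $k(x,k^{-1}l(x,x))$, then apply \eqref{G(h(x,y)} and \eqref{eq-Gxgx} to get $G_{(u,v)}=G_{(x,x)}$. You also make explicit the trivial inclusion $\ker\alpha\subset G_{(x,x)}$, which the paper leaves implicit in ``it suffices''.
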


\begin{proof}
It suffices to prove that the equality $G_{(y,z)}=G_{(x,x)}$ holds for
arbitrary points $y,z\in X$. By the transitivity of the binary action, there
exist elements $g,g'\in G$ such that $g(x,x)=y$ and $g'(x,x)=z$. Now, taking
\eqref{G(h(x,y)} and \eqref{eq-Gxgx} into account, we obtain
\[
G_{(y,z)}
=
G_{(g(x,x),g'(x,x))}
=
G_{(g(x,x),g(x,g^{-1}g'(x,x)))}
=
G_{(x,g^{-1}g'(x,x))}
=
G_{(x,x)}.
\]
\end{proof}


\begin{thebibliography}{99}

\bibitem{movsisyan-1}
Yu.~M. Movsisyan,
``The multiplicative group of a field and hyperidentities,''
\emph{Math. USSR-Izv.} \textbf{35} (1990), 377--391.

\bibitem{Gev}
P.~S. Gevorkyan,
``On binary $G$-spaces,''
\emph{Math. Notes} \textbf{96} (2014), 600--602.

\bibitem{Gev2}
P.~S. Gevorgyan,
``Groups of binary operations and binary $G$-spaces,''
\emph{Topology Appl.} \textbf{201} (2016), 18--28.

\bibitem{Gev1}
P.~S. Gevorgyan,
``Groups of invertible binary operations of a topological space,''
\emph{J. Contemp. Math. Anal.} \textbf{53} (2018), 16--20.

\bibitem{Gev-Iliadis}
P.~S. Gevorgyan and S.~D. Iliadis,
``Groups of generalized isotopies and generalized $G$-spaces,''
\emph{Mat. Vesnik} \textbf{70} (2018), no.~2, 110--119.

\bibitem{Gev-Naz}
P.~S. Gevorgyan and A.~A. Nazaryan,
``On orbits and bi-invariant subsets of binary $G$-spaces,''
\emph{Math. Notes} \textbf{109} (2021), 38--45.

\bibitem{Gev-3}
P.~S. Gevorgyan,
``On orbit spaces of distributive binary $G$-spaces,''
\emph{Math. Notes} \textbf{112} (2022), 177--182.

\bibitem{Gev4}
P.~S. Gevorgyan,
``Bi-equivariant fibrations,''
\emph{Topology Appl.} \textbf{329} (2023), 108361.

\bibitem{Gev-Quitzeh}
P.~S. Gevorgyan and Q. Morales Melendez,
``Universal space for binary $G$-spaces,''
\emph{Topology Appl.} \textbf{329} (2023), 108370.

\bibitem{Palais}
R.~S. Palais,
\emph{The Classification of $G$-Spaces},
Memoirs of the American Mathematical Society, vol.~36,
American Mathematical Society, Providence, RI, 1960.

\bibitem{Bredon}
G. Bredon,
\emph{Introduction to Compact Transformation Groups},
Pure and Applied Mathematics, vol.~46,
Academic Press, London, 1972.

\bibitem{Dieck}
T. tom Dieck,
\emph{Transformation Groups},
De Gruyter Studies in Mathematics, vol.~8,
De Gruyter, Berlin, 1987.

\end{thebibliography}
\end{document}